\newtheorem{thm}{Theorem}[section]
\newtheorem{cor}[thm]{Corollary}
\newtheorem{prop}[thm]{Proposition}
\newtheorem{conj}[thm]{Conjecture}
\newtheorem{lem}[thm]{Lemma}
\theoremstyle{definition}
\newtheorem{defn}[thm]{Definition}
\newtheorem{eg}[thm]{Example}
\newtheorem{ques}[thm]{Question}
\theoremstyle{remark}
\newtheorem{rem}[thm]{Remark}
\newtheorem*{ack}{Acknowledgements}
\newcommand{\qtq}[1]{\quad\mbox{#1}\quad}
\newcommand{\tsum}[0]{\textstyle{\sum}}
\newcommand{\spec}{\mathrm{Spec}\, }
\title[Subadditivity of Kodaira dimension does not hold in char $p>0$]{Subadditivity of Kodaira dimension does not hold in positive characteristic}
\author{Paolo Cascini}
\address{Department of Mathematics, Imperial College London, 180 Queen's Gate,
London SW7 2AZ, UK}
\email{p.cascini@imperial.ac.uk}
\author{Sho Ejiri}
\address{Department of Mathematics, Graduate School of Science, Osaka University,
Toyonaka, Osaka 560-0043, Japan}
\email{shoejiri.math@gmail.com, s-ejiri@cr.math.sci.osaka-u.ac.jp}
\author{J\'{a}nos Koll\'{a}r}
\address{Department of Mathematics, Fine Hall, Washington Road,
Princeton University,  Princeton, NJ 08544-1000  USA}
\email{kollar@math.princeton.edu}
\author[Lei Zhang]{Lei Zhang}
\address{School of Mathematical Sciences,
	University of Science and Technology of China, Hefei, 230026, P. R. of China}
\email{zhlei18@ustc.edu.cn}
\begin{document}
\maketitle
\begin{abstract}
Over any algebraically closed field of positive characteristic, we construct  examples of fibrations violating subadditivity of Kodaira dimension.
\end{abstract}

\section{Introduction}
Throughout this note, a \emph{fibration} is a projective morphism $f\colon X \rightarrow Y$ between two normal varieties such that $f_*\mathcal{O}_X \cong \mathcal{O}_Y$.
For fibrations between varieties defined over the field $\mathbb{C}$ of complex numbers, Iitaka~\cite{Ii72} proposed the following conjecture:
\begin{conj}[Iitaka' conjecture~\cite{Ii72}] \label{conj:iitaka}
Let $f\colon X\rightarrow Y$ be a fibration between  smooth projective varieties over $\mathbb{C}$, with $\dim X = n$ and $\dim Y = m$.
Let $X_{\bar{\eta}}$ denote the geometric generic fibre of $f$.
Then the following inequality holds:
 \begin{equation}\tag{$C_{n,m}$}
 \kappa(X)\geq \kappa(Y) + \kappa(X_{\bar{\eta}}).
 \end{equation}
\end{conj}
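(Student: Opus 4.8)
The plan is as follows, though it must be said at once that Conjecture~\ref{conj:iitaka} is open in general: what I describe is the standard strategy, which does settle every case presently known, and whose breakdown in characteristic $p$ is precisely the subject of this note. One separates the three Kodaira dimensions occurring in $C_{n,m}$ and controls each of them by a positivity property of a direct image sheaf. If $\kappa(X_{\bar\eta})=-\infty$ there is nothing to prove, so assume $\kappa(X_{\bar\eta})\ge 0$.

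The engine is the Fujita--Kawamata--Viehweg--Koll\'ar semipositivity theorem: after a suitable birational modification one may assume that the sheaves $f_*\omega_{X/Y}^{\otimes\ell}$ are locally free and weakly positive, indeed nef in favorable situations. Its proof runs through the variation of Hodge structure on $R^{n-m}f_*\mathbb{C}$ over the smooth locus of $f$, equivalently through the curvature of the Hodge metric, and already here one uses generic smoothness of $f$ together with Hodge theory. Next one passes to a relative Iitaka model, that is, after a birational modification of $X$, a factorization $X\to Z\to Y$ whose first map has geometric generic fibre of Kodaira dimension $0$. Then the weak positivity of $f_*\omega_{X/Y}^{\otimes\ell}$ yields $\kappa(X_{\bar\eta})$ independent pseudo-effective classes along the fibres of $Z\to Y$, while pulling back pluricanonical sections from $Y$ supplies the remaining $\kappa(Y)$ directions; assembling these gives $\kappa(X)\ge\kappa(Y)+\kappa(X_{\bar\eta})$ as soon as one can produce enough sections on $Y$ itself.

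It is exactly this last point where the argument must branch and where the difficulty concentrates. If $Y$ is of general type, Viehweg's weak-positivity package already suffices; the same machinery, after Koll\'ar, disposes of the case when $X_{\bar\eta}$ is of general type, and Kawamata's argument more generally covers the case when $X_{\bar\eta}$ admits a good minimal model, so that in particular $C_{n,m}$ is unconditional whenever the fibre dimension $n-m$ is at most three. For an arbitrary base one replaces $Y$ by its Iitaka fibration $Y\to W$ and is reduced to the fibered variant $C_{n,m}^{+}$ over a base of general type, together with the case $\kappa(Y)=0$; the latter long resisted algebraic methods and was obtained first analytically by Cao--P\u{a}un for abelian $Y$, then algebraically by Hacon--Popa--Schnell and extended to more general fibered bases.

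The main obstacle is the intermediate range $0<\kappa(Y)<\dim Y$: one needs semipositivity of the $f_*\omega_{X/Y}^{\otimes\ell}$ in a form robust under restriction to the fibres of the Iitaka fibration of $Y$ and compatible with base change $X\times_Y Y'\to Y'$, and no single algebraic statement currently achieves this uniformly, which is why the unconditional theorem over $\mathbb{C}$ is still a patchwork of the cases above rather than one clean proof. It also bears repeating that every ingredient above --- generic smoothness, Hodge-theoretic semipositivity of $f_*\omega_{X/Y}^{\otimes\ell}$, and good behaviour of Kodaira dimension under the relative Iitaka fibration --- can fail over a field of characteristic $p>0$, and it is this failure that the constructions of the present paper exploit in order to violate $C_{n,m}$ there.
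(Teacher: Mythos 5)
The statement you were asked to address is Conjecture~\ref{conj:iitaka}, which is exactly that: a conjecture. The paper states it only as background for its positive-characteristic counterexamples and contains no proof of it, so there is nothing in the paper to compare your argument against, and no complete argument is currently known to anyone. You recognize this explicitly, and what you give is not a proof but a survey of the standard strategy: semipositivity of $f_*\omega_{X/Y}^{\otimes\ell}$ via Hodge theory, reduction along relative and base Iitaka fibrations, and the catalogue of settled cases. Judged as a proof, it therefore has a genuine and (at present) irreparable gap, which you yourself locate correctly: in the range $0<\kappa(Y)<\dim Y$ one lacks a form of weak positivity robust under restriction to the fibres of the Iitaka fibration of $Y$ and under the relevant base changes, and the known results are a patchwork of special cases rather than a uniform argument.

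As a description of the state of the art your sketch is essentially accurate and consistent with the cases the paper cites: $\dim Y\le 2$ \cite{Ka82,Cao18}, $X_{\bar\eta}$ admitting a good minimal model \cite{Ka85}, $X_{\bar\eta}$ of general type \cite{Ko87}, and $Y$ of maximal Albanese dimension \cite{CH11,CP17,HPS}. Two small caveats: the step where weak positivity ``yields $\kappa(X_{\bar\eta})$ independent pseudo-effective classes'' to be ``assembled'' with sections from $Y$ is stated much more loosely than the actual mechanisms (Viehweg's fibre-product and covering tricks, Kawamata's use of good minimal models and canonical bundle formulas), and the case of fibre dimension three rests on abundance for threefolds, which should be said. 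Your closing observation — that generic smoothness, Hodge-theoretic semipositivity of $f_*\omega_{X/Y}^{\otimes\ell}$, and the behaviour of Kodaira dimension under these reductions all fail in characteristic $p>0$ — is precisely the phenomenon the constructions of this paper exploit, so your text is a reasonable framing of why the conjecture is open and why the counterexamples here are possible, but it neither is nor could be a proof of $C_{n,m}$.
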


Considering the recent developments of birational geometry over a field of positive characteristic, it is natural to ask if Conjecture~\ref{conj:iitaka} can be generalised to this setting. It is important to notice though that, given a fibration $f\colon X\to Y$,  even assuming that both $X$ and $Y$ are smooth, the geometric generic fibre $X_{\bar{\eta}}$ is possibly singular and not even reduced if $f$ is inseparable. Thus Iitaka' conjecture can be posed in two different versions (cf. \cite[Conjecture 1.2 and 1.4]{Zh19a}):

\begin{ques} \label{iitaka-char-p}
Let $f\colon X\rightarrow Y$ be a fibration between normal projective varieties over an algebraically closed field $k$ of positive characteristic,
with generic fibre $X_{\eta}$ and geometric generic fibre $X_{\bar\eta}$. Do the following inequalities hold?
\begin{enumerate}
\item (Strong form)  $\kappa(X)\geq \kappa(Y) + \kappa(X_{\eta})$.
\item (Weak form)  $\kappa(X)\geq \kappa(Y) + \kappa(X_{\bar \eta})$.
\end{enumerate}
\end{ques}

Since in positive characteristic resolution of singularities has been proved only in dimension not greater than three \cite{CP08,CP09}, we  use a notion of Kodaira dimension that is independent of the existence of   resolutions, see Definition~\ref{kappa.defn}. 

Our main result is a series of counterexamples to the Strong form of 
Question~\ref{iitaka-char-p}.

\subsection{Previous work on Iitaka' conjecture}
According to Mori~\cite[\S 6]{Mo87},
before the work of  Ueno~\cite{Ue77} and Viehweg~\cite{Vie77} appeared,
Conjecture ~\ref{conj:iitaka} was widely believed to be false,
probably because of the existence of non-algebraic counterexamples (cf. \cite[Remark~4]{NU73}). 

Over the last few decades, many important cases of the conjecture were solved \cite{Vie77,Vie82,Vie83,Ka81,Ka82,Ka85,Ko87,Fuj03,Bir09,CH11,CP17,Cao18, HPS}, and, in particular, it is known to hold true if (1) $\dim Y=1$ or $2$ \cite{Ka82, Cao18}; (2) $X_{\bar\eta}$ has a good minimal model \cite{Ka85}; (3) $X_{\bar\eta}$ is of general type \cite{Ko87}; (4) $Y$ is of maximal Albanese dimension \cite{CH11,CP17,HPS}.


The positive characteristic version  has been answered affirmatively for fibrations with smooth geometric generic fibres with relative dimension one or with $\dim X=3$ and $p>5$ \cite{CZ15, EZ16}.
One of the advantages of assuming that $X_{\bar{\eta}}$ is smooth lies in the fact that,
under some additional conditions, $f_*\omega_{X/Y}^N$ is weakly positive \cite{Pa14, Ej17}.
Question~\ref{iitaka-char-p} has also been studied without assuming that $X_{\bar\eta}$ is smooth.
Patakfalvi proved $C_{n,m}$ when $Y$ is of general type and $X_{\bar\eta}$ has non-nilpotent Hasse--Witt matrix (\cite{Pa18}).
Assuming that $K_X$ is nef and some other additional conditions, the fourth author \cite{Zh19a} proved that $F_Y^{e*}(f_*\omega_{X/Y}^N\otimes \omega_Y^{N-1})$ contains a non-zero weakly positive subsheaf. This plays a similar role as $f_*\omega_{X/Y}^N$ does when studying Iitaka' conjecture. The fourth author studied fibrations with singular geometric fibres and  proved subadditivity of Kodaira dimension for three-folds over an algebraically closed field of characteristic $p>5$, assuming that  $Y$ is of maximal Albanese dimension, which implies the abundance conjecture for minimal three-folds $X$ with $q(X) >0$ \cite{Zh17, Zh19b}.
The arguments in \cite{Zh17} heavily rely on results of the minimal model program \cite{HX15, Bir13, BW17}, among which, an important ingredient is the fact that the relative minimal model over an abelian variety is actually minimal, as a consequence of the cone theorem \cite[Theorem 1.1]{BW17}.

\begin{defn}\label{kappa.defn}
 Let $X$ be a normal projective variety over a field $K$ which is not necessarily algebraically closed and let $D$ be an integral divisor on $X$. The \emph{$D$-dimension} $\kappa(X,D)$ is defined as
\[\kappa(X,D) =\left\{
\begin{array}{llr}
-\infty  &\text{if }  |mD| = \emptyset \text{ for all $m\in \mathbb Z_{>0}$,}\\
\max \{\dim_k \Phi_{|mD|}(X) \mid m \in \mathbb{Z}_{>0} \} \quad&\text{otherwise.}
\end{array}\right.
\]
Let $K_X$  denote the Weil divisor corresponding to the dualizing sheaf $\omega_X$, namely, $\mathcal{O}_X(K_X) \cong \omega_X$. If $\sigma\colon Z \to X$ is a birational morphism of normal projective  varieties, then
$\sigma_*|mK_Z| \subset |mK_X|$, hence
$\kappa(Z, K_Z) \leq \kappa(X, K_X)$.
We can thus define the \emph{Kodaira dimension} of an irreducible proper $K$-scheme $X$ to be
$$\kappa(X):= \min \{\kappa(Z, K_Z)\mid Z~\mathrm{is~a~projective~normal~birational~model~of}~
\operatorname{red}(X)\}.$$
It is easy to see that $\kappa(X)$ is a birational invariant and coincides with the classical definition of Kodaira dimension if $X$ admits a smooth birational model.

For a fibration $f\colon X\rightarrow Y$ between normal projective varieties, the generic fibre $X_{\eta}$ is normal, hence $\kappa(X_{\eta})$ is always  well defined and equals $\kappa(X_{\bar{\eta}})$ if $X_{\eta}$ is smooth over $k(\eta)$.
\end{defn}


%
\medskip

\subsection{Description of the paper}
 We present two constructions of counterexamples to the strong form of Question~\ref{iitaka-char-p}. The original counterexamples are due to the authors PC, SE and LZ;  these are  global in nature. We describe them in Section~\ref{sec:example}. 
Analysing the generic fibre of the global examples  led JK to more local ones.
 We describe them in Section~\ref{sec.3}. 
We decided to write these up together since, although ultimately they are about the same objects, the two approaches  explore very different aspects of it.

The first one stems from the fibre products of a Raynaud's surface $f\colon S \to C$ over a Tango curve $C$, which satisfies that $f_*\omega_{S/C}^l$ are anti-ample for all $l\geq 1$. We show that, for sufficiently large $m$, the fibre product $X^{(m)}$ has negative Kodaira dimension by verifying that  $f^{(m)}_*\omega_{X^{(m)}}^l \cong (\bigotimes^mf_*\omega_{S/C}^l) \otimes \omega_C^l $ are anti-ample, where $f^{(m)}\colon X^{(m)}:= S\times_C S \times_C \cdots \times_C S \to C$ is the induced morphism. Then for a suitable choice of $m_0$, the fibrations  $X^{(m_0)} \to X^{(l)}$ for $l <m$ provide the desired examples.

The second construction is obtained by considering the product $G^n$ of a regular, but not smooth, curve $G$ defined over the function field $K=k(t)$, with arithmetic genus $p_a(G) \geq 1$. We show that for some sufficiently large integer $n$, the product $G^n$ is birational to a Fano variety. We may do a base change and obtain a fibration $f\colon S \to C$ from a surface $S$ to a curve $C$ of genus $g(C)\ge 2$ and such that $\kappa(S) \geq 0$ and the generic fibre $F$ is isomorphic to $G\otimes_KK(C)$. If, as above $X^{(m)}$ denotes the $(m+1)$-dimensional fibre product of $S$ over $C$ then, for a suitable value of $m_0$ when the generic fibre of $X^{(m_0)} \to C$ is birational to a Fano variety, we have that $X^{(m_0)}$ has negative Kodaira dimension, so that we obtain a desired example similarly as above.

We will see that the first class of examples are special cases of the second class. We present both the two constructions because the computations are different. Indeed, the first construction requires that $C$ is a Tango-Raynaud curve while the computation is easy and direct, and the second construction is more general while the computation is subtle but reveals more information and yields examples of lower dimension. More precisely, for every characteristic $p\geq 3$, we get counterexamples of dimension $\leq 2p-1$, and for $p=2$ we get examples of dimension four.

In both sets of examples the base spaces of the  fibrations are uniruled varieties and the  geometric generic fibres are singular. 
It is still reasonable to expect that subadditivity of Kodaira dimension holds for fibrations in characteristic $p>0$ if, either we assume that the base is not uniruled (cf. \cite{Ej19}), or we replace $\kappa(X_{\eta})$ by the Kodaira dimension $\kappa(Z)$ of a smooth birational model $Z$ of $X_{\bar\eta}$ (cf. \cite{CZ15}). Note that $\kappa(X_{\eta})\ge\kappa(Z)$ (cf. \cite{Ta18}).

\smallskip

At the end of this note (see Section \ref{sec:non-alg-closed-field}), we describe an example due to  Tanaka \cite{Ta16} to show that a logarithmic version of subadditivity of Kodaira dimension does not hold true for some surfaces over non-algebraically closed fields.



\smallskip

\begin{ack}
The authors are grateful to H. Tanaka for helpful comments and pointing out about the log version of Question~\ref{iitaka-char-p} (Example~\ref{eg:tanaka}). We would also like to thank F. Bernasconi, O. Fujino, T.~Murayama, Zs. Patakfalvi, R.~van~Dobben~de~Bruyn, J. Waldron, and C. Xu   for many useful comments and corrections.

The first author was supported by EPSRC.
The second author was supported by JSPS KAKENHI Grant Number 18J00171.
The third author was supported by the NSF under grant number
DMS-1901855.
The fourth author was supported by a NSFC grant (No. 11771260).
\end{ack}

\section{The first construction}\label{sec:example}
 We shall present the first construction in this section stemming from Raynaud's surfaces. For the convenience of the reader, we include all the details.

\subsection{Tango--Raynaud curves} \label{section:TR-curve}
We begin by recalling the notion of a Tango--Raynaud curve, introduced in \cite{Mu13}.
A smooth projective curve $C$ is a \textit{Tango--Raynaud curve} if there exists a line bundle $L$ on $C$ such that
$L^p \cong \omega_C$
and the map
$$
H^1\left(C,L^{-1} \right)
\to
H^1\left(C, L^{-p}\right)
$$
induced by the Frobenius morphism is \textit{not} injective.
As explained in \cite[\S 1]{Mu13},  \cite[Lemma~12]{Ta72} implies that $C$ is a Tango--Raynaud curve if and only if
there exists a rational function $f \in K(C)$ such that
$df \ne 0$ and each coefficient in the divisor $(df)$ is divisible by $p$.
\begin{eg}[\textup{\cite[Example~1.3]{Mu13}}] \label{eg:TR-curve}
Fix $e>0$. Let $C \subset \mathbb P^2$ be the plane curve defined by
$$
Y^{pe} -YX^{pe-1} = Z^{pe-1}X,
$$
where $[X,Y,Z]$ are homogeneous coordinate of $\mathbb P^2$.
Let $U = C \cap \{X \neq 0\} \subset \mathrm{Spec}~ k[y,z]$ with $y = \frac{Y}{X}, z = \frac{Z}{X}$. Then $U$ is defined by $y^{pe} - y = z^{pe-1}$ and $C = U \cup \{\infty :=[0,0,1]\}$.
In particular, we have
$$
-dy = -z^{pe-2} dz
$$
on $U$.
Thus, $\omega_C$ is generated by $dz$ on $U$.
Since the degree of $C$ is $pe$, we have $\deg \omega_C = pe(pe-3)$,
and $(dz) = pe(pe-3) \cdot (\infty)$. It follows that $C$ is a Tango--Raynaud curve.
\end{eg}
\subsection{Raynaud and Mukai's construction of algebraic fibre spaces}
\label{section:RM-surface}

We only use a special case of Mukai's construction \cite{Mu13}. Fix a Tango--Raynaud curve $C$ as in Example \ref{eg:TR-curve} and  let $D = e(pe-3) \cdot (\infty)$ so that $K_C=(dz)= pD$. Assume $e$ is prime to $p$.
We can regard $dz$ as an element of $H^0(C, \mathcal{B}^1(-D))$ where, if $F_C$ denotes the Frobenius morphism on $C$, then $\mathcal{B}^1 := d{F_C}_*\mathcal{O}_C$. Thus,  $dz$ induces a non-zero element
$\xi \in H^1(C, \mathcal O_C(-D))$ lying in the kernel of the map
$$
H^1(C, \mathcal O_C(-D)) \to H^1(C, \mathcal O_C(-pD))
$$
induced by $F_C$.

Thus, $\xi$ corresponds to an extension
\begin{align} \label{extension:1} 
0 \to \mathcal O_C(-D)
\to \mathcal E
\xrightarrow{\alpha} \mathcal O_C
\to 0.
\end{align}
We define $P = \mathbb P(\mathcal E)$ ($:=\mathrm{Proj}_{\mathcal O_C}\bigoplus_{i=0}^{+\infty}S^i\mathcal E$)
and let $g\colon P \to C$ denote the natural projection.
Let $T$ be a divisor corresponding to $\mathcal O_P(1)$ and let $F \subset P$ be the section of $g$ corresponding to $\alpha$.
Then $T|_F \sim 0$.
Since $T-F$ is relatively trivial over $C$,
applying $g_*$ to the exact sequence
\begin{align}
0 \to \mathcal O_P(T -F) \to \mathcal O_P(T) \to \mathcal O_F \to 0,
\end{align}
we obtain (\ref{extension:1}), which implies that $T-F \sim -g^*D$. Thus,
\begin{align} \label{F and T}
F \sim T +g^*D~\text{and}~F^2=\deg D.
\end{align}
If we set $P_1:= \mathbb P(F_C^*\mathcal E)$, then we have the following commutative diagram:
\begin{align}
\xymatrix{ P^1 \ar[dr]^{F_P} \ar[d]_{F_{P/C}} &  \\
P_{1} \ar[r]_{W} \ar[d]_{g_{1}}& P \ar[d]^{g} \\
C^1 \ar[r]_{F_C} & C
}
\end{align}
where $F_{P/C}$ denotes the relative Frobenius morphism.
By the choice of $\xi$, we see that the pull-back
\begin{align} \label{extension:2} 
0 \to \mathcal O_C(-pD)
\to F_C^*\mathcal E
\to \mathcal O_C
\to 0.
\end{align}
of (\ref{extension:1}) by $F_C$ splits.
The splitting map $F_C^* \mathcal E \twoheadrightarrow \mathcal O_C(-pD)$
induces a section $G' \subset P_{1}$ of
$g_{1}\colon P_{1} \to C^1$.
We then have
$$
G' \sim W^* F -p g_{1}^*D.
$$
Set $G:= {F_{P/C}}^*G'$.
Then we get
\begin{align} \label{G and T}
G \sim pF -p g^* D \sim pT.
\end{align}

We may choose $e$ so that there exists  a positive integer $l$ which divides both  $e$ and $p+1$. Let $e':=\frac e l$, $r:=\frac{p+1}l$ and  $D' := \frac{D}{l} = e'(pe-3)\cdot (\infty)$.
Then
$$
G + F \sim (p+1)T + g^*D = l\left(rT + g^*D'\right ) = lM,
$$
where $M:= rT + g^*D'$. The equivalence yields a cyclic $l$-cover $\pi\colon S \to P$ branched over $G+F$ such that $S$ is smooth.
We use the following notation:
$$
\xymatrix{
S \ar@/_10pt/[rr]_{f} \ar[r]^{\pi} & P \ar[r]^{g} & C.
}
$$
By an easy calculation we have
\begin{equation}\label{canonical bundle formula}
\begin{split}
K_{S/C} &\sim \pi^* (K_{P/C} + (l-1)M)\\
& \sim \pi^* (-2T -g^*D + (l-1)M )\\
& \sim \pi^*\left ( \left (p-1-r\right )T\right) - f^*D'.
\end{split}
\end{equation}
Let $q:=p-1-r$. Then, for all positive integer $n$ such that $nq\ge r(l-1)$, we have
\begin{align*}
f_*\omega_{S/C}^n &\cong g_*\left (\mathcal O_P\left(n\left(p-1-r\right)T -ng^*D'\right)\otimes \bigoplus_{i=0}^{l-1} \mathcal O_S(-iM) \right)  \\
& \cong  \bigoplus_{i=0}^{l-1}g_*\mathcal{O}_P\left(\left(nq - ri\right)T -(n + i)g^*D'\right) \\
& \cong \bigoplus_{i=0}^{l-1}S^{nq-ri}\mathcal{E}(-(n+i)D').
\end{align*}

\begin{lem} \label{lem:K_S ample}
If $\{l,p\} \ne \{2,3\}$, then $K_S$ is ample, otherwise  $\kappa(S) = 1$.
\end{lem}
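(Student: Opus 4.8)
The plan is to rewrite $K_S$ as the pull-back of a divisor on the ruled surface $P$ under the finite morphism $\pi$, and then to test ampleness on $P$ by intersection theory. Combining the canonical bundle formula~(\ref{canonical bundle formula}) with $K_C = pD = plD'$ (recall $D = lD'$) one obtains
\[
K_S \sim \pi^*(qT) - f^*D' + f^*K_C \sim \pi^*\bigl(qT + (pl-1)g^*D'\bigr) =: \pi^*K',
\]
where $K' := qT + (pl-1)g^*D'$ is a divisor on $P$. Since $\pi$ is finite and surjective, $K_S = \pi^*K'$ is ample if and only if $K'$ is ample, so everything is transported to $P = \mathbb P(\mathcal E)$. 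I note at the outset that $l\ge 2$ (for $l = 1$ the cover is trivial, and the statement would be false), so $q = p-1-r \ge 0$; moreover $q = 0$ holds exactly when $r = p-1$, i.e. $p+1 = l(p-1)$, which for $l\ge2$ forces $(p,l)\in\{(2,3),(3,2)\}$, that is $\{l,p\}=\{2,3\}$. One also records that $\deg D' = e'(pe-3) > 0$ for every admissible Tango--Raynaud curve here (as $pe > 3$).

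Next I dispose of the exceptional case $q = 0$. Then $K_S \sim (pl-1)f^*D'$ is the pull-back of the divisor $(pl-1)D'$ of positive degree on $C$, so by the projection formula and $f_*\mathcal O_S\cong\mathcal O_C$ one gets $H^0(S, mK_S)\cong H^0\bigl(C, m(pl-1)D'\bigr)$ for every $m>0$; hence $\kappa(S, K_S)=1$, and therefore $\kappa(S)=1$ since $S$ is smooth.

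For the main case $\{l,p\}\ne\{2,3\}$, i.e. $q\ge1$, I would verify the Nakai--Moishezon criterion for $K'$ on $P$. Writing $\mathfrak f$ for a fibre of $g$, the relations $T\cdot\mathfrak f=1$, $\mathfrak f^2=0$ and $T^2 = -\deg D = -l\deg D'$ follow from~(\ref{F and T}). Then
\[
K'^2 = q\,\deg D'\,\bigl(l(2p-q)-2\bigr) > 0
\]
because $2p-q = p+1+r \ge p+2$ and $l\ge2$; also $K'\cdot\mathfrak f = q > 0$ and, since $T\cdot F = 0$, $K'\cdot F = (pl-1)\deg D' > 0$. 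For any remaining irreducible curve $\Gamma$ on $P$, put $d := \Gamma\cdot\mathfrak f \ge 1$; as $\Gamma\ne F$ one has $\Gamma\cdot F\ge0$, so $F\sim T+g^*D$ gives $T\cdot\Gamma = \Gamma\cdot F - d\deg D \ge -d\,l\deg D'$, and therefore
\[
K'\cdot\Gamma = q(T\cdot\Gamma) + (pl-1)\,d\deg D' \ \ge\ d\,\deg D'\,\bigl(l(p-q)-1\bigr) \ >\ 0,
\]
since $p-q = 1+r \ge 2$ and $l\ge2$. Hence $K'$ is ample, and so is $K_S = \pi^*K'$.

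The step I expect to be the main obstacle is exactly the Nakai--Moishezon check on the horizontal (multisection) curves $\Gamma$ above: $T$ is not nef on $P$, so $K'$ is not visibly a nonnegative combination of a nef class and an effective one, and I cannot argue from positivity of $T$ alone. The lower bound $T\cdot\Gamma \ge -d\deg D$ extracted from the effectivity of the intersection number $\Gamma\cdot F$ is what saves the argument, and it is somewhat delicate to see that it always wins against the $(pl-1)d\deg D'$ term; the clean inequalities $p - q = 1+r \ge 2$ and $2p - q = p+1+r \ge p+2$ make it work. One should also keep in mind the standard fact that ampleness of a line bundle is detected after pull-back by the finite surjective map $\pi$.
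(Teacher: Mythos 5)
Your proof is correct, and it follows the same overall strategy as the paper: express $K_S$ as $\pi^*K'$ for a divisor $K'$ on $P$, use that ampleness passes through the finite cover $\pi$, and in the degenerate case $q=0$ observe that $K_S$ is a pullback from $C$, giving $\kappa(S)=1$. The difference is in how ampleness of $K'$ is checked. The paper re-decomposes $K_S\sim\pi^*\bigl(qF+g^*((p+l)D')\bigr)$ using $F\sim T+g^*D$, and then ampleness for $q>0$ is almost immediate from the facts that $F$ is nef and big ($F$ irreducible with $F^2=\deg D>0$) and that $g^*D'$ is positive on horizontal curves while $F$ is positive on fibres. You instead keep the class in the form $qT+(pl-1)g^*D'$ and run an explicit Nakai--Moishezon check in the basis $\{T,\mathfrak f\}$; your key estimate $T\cdot\Gamma\ge \Gamma\cdot F-d\deg D\ge -d\deg D$ for $\Gamma\ne F$ is exactly the nefness of $F$ in disguise (and your final coefficients $l(p-q)-1=p+l$ recover the paper's $(p+l)D'$). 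So the two arguments are equivalent in substance; the paper's choice of decomposition hides the curve-by-curve verification that you carry out by hand, while your version is more self-contained (you also make explicit the implicit hypotheses $l\ge 2$, $\deg D'>0$, and spell out why $q=0$ happens exactly when $\{l,p\}=\{2,3\}$, which the paper leaves to the reader).
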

\begin{proof}
By (\ref{canonical bundle formula}), we have that
\begin{align*}
K_S &\sim \pi^*(qT) - f^*D' + f^*K_C \\
& \sim  \pi^*(qT) - f^*D' + f^*(pD) \\
&= \pi^*(q(T+g^*D) + g^*((p-q)D - D')) \\
& \sim  \pi^*(qF + g^*((p+l)D')).
\end{align*}
Note that $F$ is nef and big because $F^2 >0$. Therefore, if $\{l,p\} \ne \{2,3\}$ then $q>0$ and $K_S$ is ample; otherwise $q = 0$ and then $\kappa(S) =1$.
\end{proof}

\subsection{Fibre products of Tango--Raynaud--Mukai surface} \label{section:fibre_product}
We use the same notation as in Section~\ref{section:RM-surface}.
Pick a positive integer $m$.
Let $X^{(m)}:=S \times_C \cdots \times_C S$ be the $m$-th fibre product of $S$ over $C$.
Then $X^{(m)}$ is a Gorenstein integral scheme, as each $X^{(i)}\to X^{(i-1)}$ is a flat morphism whose every fibre is a Gorenstein (integral) curve.
Denote by $f^{(m)}\colon  X^{(m)} \to C$ the natural fibration and by $p_i: X^{(m)} \to S$ the projection to the $i$-th factor.
We then have
$$
\omega_{X^{(m)}/C} \cong \bigotimes_{i=1}^{m} p_i^*\omega_{S/C}.
$$
Using the projection formula, for any positive integer $n$ such that $nq\ge r(l-1)$, we get
\begin{equation}\label{eq:push-pluri-canonical}
\begin{split}
f^{(m)}_*\omega_{X^{(m)}}^n
&\cong f^{(m)}_*\omega_{X^{(m)}/C}^n\otimes\omega_C^n\\
&\cong \left(\bigotimes^m f_*\omega_{S/C}^n \right)\otimes \omega_C^n\\
&\cong \left( \bigoplus_{0\leq i_1, \cdots, i_m \leq l-1}
\left( \bigotimes_{1\le j\le m} S^{nq-ri_j}\mathcal{E} \right) \right)
\otimes \mathcal{O}_C\left(\left(npl-mn-\sum_{j=1}^m{i_j}\right)D'\right).
\end{split}
\end{equation}

It follows that
\begin{lem}\label{kod-dim-Xm}
For $m >pl$, the dual $(f^{(m)}_*\omega_{X^{(m)}}^n)^{\vee}$ is ample. In particular, $$\kappa(X^{(m)},K_{X^{(m)}})= -\infty.$$
\end{lem}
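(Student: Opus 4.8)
The plan is to work directly from the computation~(\ref{eq:push-pluri-canonical}), filtering each of its summands by means of the extension~(\ref{extension:1}) so as to read off the sign of the twisting divisor. (In the exceptional case $\{l,p\}=\{2,3\}$ one has $q=0$, so $\omega_{X^{(m)}}\cong (f^{(m)})^*\mathcal O_C\bigl((pl-m)D'\bigr)$ by~(\ref{canonical bundle formula}) and hence $f^{(m)}_*\omega_{X^{(m)}}^n\cong\mathcal O_C\bigl(n(pl-m)D'\bigr)$, whose dual is ample for $m>pl$; so I assume $q\ge1$ below, in which case the range $nq\ge r(l-1)$ where~(\ref{eq:push-pluri-canonical}) is valid is cofinal in $\mathbb Z_{>0}$.) Since a finite direct sum of vector bundles is ample exactly when each summand is, and dualization commutes with finite direct sums,~(\ref{eq:push-pluri-canonical}) reduces the displayed ampleness statement to showing that, for every $\underline i=(i_1,\dots,i_m)\in\{0,\dots,l-1\}^m$, the bundle
\[
\mathcal V_{\underline i}\;:=\;\Bigl(\bigotimes_{j=1}^m S^{\,nq-ri_j}\mathcal E\Bigr)\otimes\mathcal O_C\Bigl(\bigl(npl-mn-{\textstyle\sum_{j}i_j}\bigr)D'\Bigr)
\]
has ample dual.

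Next I would exhibit a filtration. The extension~(\ref{extension:1}) presents $\mathcal E$ as an extension of $\mathcal O_C$ by $\mathcal O_C(-D)$; hence $S^k\mathcal E$ carries a filtration by subbundles with graded pieces $\mathcal O_C(-aD)$, $a=0,\dots,k$, and, tensoring these, $\bigotimes_j S^{nq-ri_j}\mathcal E$ carries a filtration by subbundles with graded pieces $\mathcal O_C\bigl(-(\sum_j a_j)D\bigr)$, $0\le a_j\le nq-ri_j$. Substituting $D=lD'$ and twisting, $\mathcal V_{\underline i}$ acquires a filtration by subbundles whose graded pieces are line bundles $\mathcal O_C(c_tD')$ with
\[
\textstyle c_t\;=\;npl-mn-\sum_j i_j-l\sum_j a_j\;\le\;npl-mn\;=\;n(pl-m)\;\le\;-n\;\le\;-1 ,
\]
using $i_j,a_j\ge0$, $m>pl$ and $n\ge1$. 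Passing to duals reverses the filtration and replaces each graded piece by $\mathcal O_C(-c_tD')$, with $-c_t\ge1$. As $D'=e'(pe-3)\cdot(\infty)$ is an effective divisor of positive degree on the smooth curve $C$, each such $\mathcal O_C(-c_tD')$ is ample; since an iterated extension of ample line bundles is ample, $\mathcal V_{\underline i}^{\vee}$ is ample, and therefore so is $(f^{(m)}_*\omega_{X^{(m)}}^n)^{\vee}=\bigoplus_{\underline i}\mathcal V_{\underline i}^{\vee}$.

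For the consequence on Kodaira dimension I would reuse the same filtration, now of $f^{(m)}_*\omega_{X^{(m)}}^n$ itself: all its graded pieces $\mathcal O_C(c_tD')$ have negative degree, hence no nonzero global sections, so the long exact cohomology sequences of the filtration give $H^0\bigl(C,f^{(m)}_*\omega_{X^{(m)}}^n\bigr)=0$. Since $X^{(m)}$ is Gorenstein, $\omega_{X^{(m)}}$ is a line bundle and $\mathcal O_{X^{(m)}}(nK_{X^{(m)}})\cong\omega_{X^{(m)}}^{\otimes n}$, whence
\[
H^0\bigl(X^{(m)},\mathcal O_{X^{(m)}}(nK_{X^{(m)}})\bigr)\;\cong\;H^0\bigl(C,f^{(m)}_*\omega_{X^{(m)}}^{n}\bigr)\;=\;0
\]
for every $n$ in the range above. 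If $|n_0K_{X^{(m)}}|\ne\emptyset$ for some $n_0>0$, then $|kn_0K_{X^{(m)}}|\ne\emptyset$ for all $k\ge1$, which contradicts this vanishing once $kn_0q\ge r(l-1)$; hence $|nK_{X^{(m)}}|=\emptyset$ for all $n>0$, i.e.\ $\kappa(X^{(m)},K_{X^{(m)}})=-\infty$.

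The one conceptual point is that the factor $\bigotimes_j S^{nq-ri_j}\mathcal E$ is \emph{not} anti-ample on its own --- it has $\mathcal O_C$ as a quotient --- so the whole argument rests on the twisting divisor $\bigl(npl-mn-\sum_j i_j\bigr)D'$ being strictly negative, which is exactly what the hypothesis $m>pl$ (together with $n\ge1$ and $\deg D'>0$) provides. Everything else is routine: the bookkeeping of the symmetric-power and tensor filtrations, the substitution $D=lD'$, and the standard facts that direct sums and extensions of ample line bundles are ample and that a line bundle of negative degree on a curve has vanishing $H^0$.
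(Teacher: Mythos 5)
Your proof is correct and follows essentially the same route as the paper, which states the lemma as an immediate consequence of \eqref{eq:push-pluri-canonical}: the details you supply---the filtration of each summand by line bundles $\mathcal O_C(c_t D')$ with $c_t<0$ coming from \eqref{extension:1}, so that the dual is an iterated extension of ample line bundles (a formulation that, unlike Hartshorne's degree criterion, is safe in characteristic $p$), and the vanishing of $H^0$ for all $n>0$ via multiplicativity of pluricanonical sections---are exactly the justification the paper leaves implicit after its displayed computation. Your separate treatment of the edge case $q=0$ (where the range $nq\ge r(l-1)$ is empty) is a sensible extra precaution consistent with Lemma \ref{lem:K_S ample}.
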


Note that for $m \geq 2$, the variety $X^{(m)}$ is singular and,
if $Z\subset S$ is the non-smooth locus of $f$ (i.e., $Z=\mathrm{Supp}(G)$),
then $(X^{(m)})_{\mathrm{sing}}=\bigcup_{1\le i<j\le m}p_i^{-1}(Z)\cap p_j^{-1}(Z)$, which is of codimension 2. Moreover the Serre condition $S_2$ is satisfied, hence $X^{(m)}$ is normal. As showed in the Introduction, we have
\begin{thm}\label{kod-dim-Ym}
If $m > pl$  then
$\kappa(X^{(m)}) = -\infty.$
\end{thm}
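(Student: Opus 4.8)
The plan is to deduce $\kappa(X^{(m)}) = -\infty$ from Lemma~\ref{kod-dim-Xm} together with the definition of Kodaira dimension (Definition~\ref{kappa.defn}) and the normality of $X^{(m)}$. Since $\kappa(X^{(m)})$ is defined as the minimum of $\kappa(Z, K_Z)$ over all normal projective birational models $Z$ of $X^{(m)}$, and since this minimum is $\leq \kappa(X^{(m)}, K_{X^{(m)}})$ for the particular model $X^{(m)}$ itself (here I use that $X^{(m)}$ is already normal, as observed in the paragraph before the statement), it suffices to show $\kappa(X^{(m)}, K_{X^{(m)}}) = -\infty$; but that is exactly the content of Lemma~\ref{kod-dim-Xm}. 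So the real work is already packaged into that lemma, and the theorem is essentially a restatement once one checks that $X^{(m)}$ is a legitimate model, i.e. normal.

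Let me spell out the two points that need genuine verification. First, the normality of $X^{(m)}$: each projection $X^{(i)} \to X^{(i-1)}$ is flat with Gorenstein integral curve fibres, so $X^{(m)}$ is Gorenstein (in particular Cohen--Macaulay, hence $S_2$); and its singular locus is $\bigcup_{1\le i<j\le m} p_i^{-1}(Z)\cap p_j^{-1}(Z)$ with $Z = \operatorname{Supp}(G)$ the non-smooth locus of $f$, which has codimension $2$ in $X^{(m)}$ since $Z$ has codimension $1$ in $S$. Thus $X^{(m)}$ is regular in codimension $1$ and satisfies $S_2$, so by Serre's criterion it is normal. Second, one must connect Lemma~\ref{kod-dim-Xm} to pluricanonical forms: ampleness of $(f^{(m)}_*\omega_{X^{(m)}}^n)^\vee$ means $f^{(m)}_*\omega_{X^{(m)}}^n$ has no nonzero global sections (a nonzero section would give a nonzero map $\mathcal{O}_C \to f^{(m)}_*\omega_{X^{(m)}}^n$, hence a nonzero map from the ample bundle $(f^{(m)}_*\omega_{X^{(m)}}^n)^\vee$ to $\mathcal{O}_C$, impossible on a projective curve), so $H^0(X^{(m)}, \omega_{X^{(m)}}^n) = H^0(C, f^{(m)}_*\omega_{X^{(m)}}^n) = 0$ for all $n$ with $nq \ge r(l-1)$; and since any $n$ divides such an $n$, multiplying sections shows $H^0(X^{(m)}, \omega_{X^{(m)}}^{n'}) = 0$ for every $n' > 0$. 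Hence $|n' K_{X^{(m)}}| = \emptyset$ for all $n' > 0$, so $\kappa(X^{(m)}, K_{X^{(m)}}) = -\infty$ by definition.

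Combining: $\kappa(X^{(m)}) = \min_Z \kappa(Z, K_Z) \le \kappa(X^{(m)}, K_{X^{(m)}}) = -\infty$, so $\kappa(X^{(m)}) = -\infty$.

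The only mild subtlety — not so much an obstacle as a point to state carefully — is the interplay between the hypothesis $nq \ge r(l-1)$ (needed for the explicit computation~\eqref{eq:push-pluri-canonical} of $f^{(m)}_*\omega_{X^{(m)}}^n$) and the requirement that \emph{all} positive multiples of $K_{X^{(m)}}$ have empty linear system. This is handled by the elementary remark that vanishing of $H^0(\omega^n)$ for one admissible $n$, applied after replacing $n$ by any large multiple, forces vanishing for every positive degree; alternatively, one observes directly that if some $|n'K_{X^{(m)}}| \neq \emptyset$ then $|nn'K_{X^{(m)}}| \neq \emptyset$ for the admissible value $nn'$, a contradiction. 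Apart from this bookkeeping, the theorem follows formally from Lemma~\ref{kod-dim-Xm} and the normality discussion, exactly as indicated in the Introduction.
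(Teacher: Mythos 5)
Your proof is correct and follows essentially the same route the paper intends: the theorem is just Lemma~\ref{kod-dim-Xm} combined with the normality of $X^{(m)}$ (Gorenstein, hence $S_2$, plus singularities in codimension $2$ and Serre's criterion), so that $X^{(m)}$ itself is an admissible model in Definition~\ref{kappa.defn} and $\kappa(X^{(m)})\le\kappa(X^{(m)},K_{X^{(m)}})=-\infty$. The additional details you spell out (ampleness of $(f^{(m)}_*\omega_{X^{(m)}}^n)^{\vee}$ forcing $H^0=0$, and passing from the admissible values of $n$ with $nq\ge r(l-1)$ to all positive multiples of $K_{X^{(m)}}$) are exactly the steps the paper leaves implicit.
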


\subsection{Counterexamples to subadditivity of Kodaira dimension}\label{sec:construction1}
We use the same notation as in the previous sections.
Let $m_0(p)$ be the maximal integer such that $\kappa(X^{(m_0(p))}) \geq 0$.
Lemma \ref{lem:K_S ample} and Lemma \ref{kod-dim-Xm} imply that $1\le m_0(p)\le pl$.

For any $1\le m\le m_0(p)$, we have the following commutative diagram
$$
\xymatrix{&X^{(m_0(p)+1)}\ar[r]\ar[d] & X^{(m_0(p)+1-m)}\ar[d]\\
&X^{(m)}\ar[r]            &C
}
$$
Notice that since $X^{(m)} \to C$ is separable, the generic fibre of $X^{(m_0(p)+1)} \to X^{(m)}$ is isomorphic to  $X^{(m_0(p)+1-m)}_{\eta}\otimes_{K(C)}K(X^{(m)})$ and hence has nonnegative Kodaira dimension.
\medskip

In conclusion,
\begin{itemize}
\item  For $p\geq 3$, we take $l=2$ and $m_0(p) \leq 2p$. The fibration $h\colon Y^{(m_0(p)+1)} \to Y^{(m)}$ with $\dim Y^{(m_0(p)+1)} \leq 2p+2$ violates subadditivity of Kodaira dimension $C_{m_0(p)+2,m+1}$.
\item  For $p=2$, we  take $l=3$ and $m_0(p) \leq 6$. The fibration $h\colon Y^{(m_0(p)+1)} \to Y^{(m)}$ with $\dim Y^{(m_0(p)+1)} \leq 8$ violates subadditivity of Kodaira dimension $C_{m_0(p)+2,m+1}$.
\end{itemize}

\section{The second construction}\label{sec.3}
In this section, we introduce the second construction by considering the product of a regular but not smooth curve over a non-algebraically closed field.
\subsection{Product of an algebraic curve}
Let $k$ be an algebraically closed field of characteristic $p$ and let $K=k(t)$. Fix positive integers $q = p^r$ and $m\ge 2$ such that $p\nmid m$. Denote by $\bar{K}$ the algebraic closure of $K$.

Let $G$ be the regular, projective model of
$$G_x:=\{u^q=x^m+t\} \subset \mathbb{A}_{x,u}^2=\spec K[x,u].$$ Thus, $G$ admits a purely inseparable cover  $\pi \colon G\to \mathbb{P}_K^1 =
\mathbb{A}_x^1 \cup \mathbb{A}_y^1$ where $\mathbb{A}_x^1=\spec K[x]$ and $\mathbb{A}_y^1=\spec K[y]$ with $x = \frac{1}{y}$. In particular, $G$ is obtained by glueing $G_x$
with $G_y$, which is the normalisation of $$\{(1+ty^m)v^q=y^m\}\subset \mathbb A^2_{y,v}=\spec K[y,v]$$ where $v = \frac{1}{u}$. Note that $G_y$ is smooth and $G_x$ is smooth outside $\{x=0\}$.

The curve $G_{\bar{K}}$ is normal outside $\{x =0\}$ and the normalisation of $(G_x)_{\bar{K}}$ is given by
{\small $$(G_x)^\nu_{\bar{K}}:= \spec \bar{K}[z] \to (G_x)_{\bar{K}}= \{u^q=x^m+t\} \subset \spec {\bar{K}}[x,u] \qquad z \mapsto (x=z^q, u=z^m+ t^{\frac{1}{q}}).$$}
It follows that if $a,c$ are positive integers such that $am-cq=1$, then
$$z = x^{-c}(u-t^{\frac{1}{q}})^a.$$

Let $G^n$ be the $n$-th fibre product of $G$ over $\spec K$, and let  $\pi^n\colon G^n \to (\mathbb{P}_K^1)^n$ be the corresponding product map of $\pi \colon G\to \mathbb{P}_K^1$. Note that  $\pi^n$ is purely inseparable.
Let $\mathbf y = (y_1, \cdots, y_n)$ and let $\mathbb A^n_{\mathbf y}=\spec K[y_1,\dots,y_n]\subset \mathbb P_K^n$ where the immersion is given by
$$(y_1, \cdots, y_n) \mapsto [1, y_1, \cdots, y_n].$$
Let $(G_y)^n \subset G^n$ be the pre-image of $\mathbb A^n_{\mathbf y}$ and
let $Y(n,m,q)$ be the normalisation of $\mathbb{P}_K^n$ in $K(G^n)$.
Then $Y(n,m,q)$ admits a finite and purely inseparable cover over $\mathbb{P}_K^n$. We often denote $Y(n,m,q)$ by $Y$ for simplicity. We have  the following commutative diagram
$$\xymatrix{&G^n \ar[d]^{\pi^n} &(G_y)^n\ar@{^{(}->}[r]\ar@{_{(}->}[l]\ar[d] &Y \ar[d]^{\phi} &Y_{\bar{K}}\ar[d]^{\bar{\phi}}\ar[l]^{\mu}  &Z_{\bar{K}} \ar[l]^{\nu}\ar[ld]^{\psi} \\
&(\mathbb{P}_K^1)^n  &\mathbb A^n_{\mathbf y}\ar@{^{(}->}[r]\ar@{_{(}->}[l]   &\mathbb{P}_K^n &\mathbb{P}_{\bar{K}}^n\ar[l] &
}$$
where $\nu\colon Z_{\bar{K}} \to Y_{\bar{K}}$ denotes the normalisation map. Note that the variety $Z_{\bar{K}}$ can also be obtained by normalising $\mathbb{P}_{\bar{K}}^n$ in $K(G_{\bar{K}}^n)$. With the above notation, we have that  $K(G_{\bar{K}}^n) = K(Z_{\bar{K}}) =\bar{K}(z_1, \cdots, z_n)$ and  the morphism $\psi\colon Z_{\bar{K}} \to \mathbb{P}_{\bar{K}}^n$ is determined by the homomorphism between their function fields
$$\psi^*\colon  \bar{K}(y_1, \cdots, y_n) \to \bar{K}(z_1, \cdots, z_n)\qquad y_i \mapsto z_i^{-q}.$$
Therefore, $Z_{\bar{K}} \cong \mathbb{P}_{\bar{K}}^n$.
\smallskip

We now want  to prove that if $n \geq m(q-1)$ then  $Y$ is a Fano variety with canonical singularities. The strategy is to consider the pull-back $\nu^*\mu^*K_Y=\nu^*K_{Y_{\bar{K}}}$ on $Z_{\bar{K}}$, which is equal to $K_{Z_{\bar{K}}}$ plus the conductor. The key point is to compute the conductor. Note that  $Y$ is smooth over the pre-image of $\mathbb A^n_{\mathbf y}$.
\smallskip

Let $\mathcal{Y}$ be the normalisation of $\mathbb{P}_K^n \times \mathbb{A}^1_t$ in $K(G^n)$. Then we get a fibration $\mathcal{Y} \to \mathbb{A}^1_t$ with the generic fibre $Y$. Our goal is to study the singularities of $\mathcal Y$. To this end, given non-zero integers $m_1,\dots,m_{m-1}$ which are relatively prime to $p$, we denote by
$$
\mathbb A^n\bigl\slash \tfrac1{q}(1, m_1,\dots, m_{n-1}) $$
the quotient toric singularity defined by the cone  $\Sigma= \sum_{i=1}^{n} \mathbb{R}_{\geq 0}\cdot e_i\subset N\otimes \mathbb R$, where
$e_1,\dots,e_n$ denote the standard basis in $\mathbb R^n$ and $N$ is  the lattice
$$N:=\tsum_{i=1}^{n}\mathbb{Z}\cdot e_i + \mathbb{Z}\cdot\tfrac1{q}(e_1+ m_1e_2 +\dots, m_{n-1}e_n).$$

\begin{prop}\label{p_only.quotient.say}
\'Etale locally, every singularity of $\mathcal{Y}$
is of the form
\begin{equation}\label{e_quotient}
\mathbb A^n\bigl\slash \tfrac1{q}(1, m_1,\dots, m_{n-1}) \times \mathbb{A}^1
\qtq{where}  m_1,\dots,m_{n-1}\in\{1, -m\}.
\end{equation}
\end{prop}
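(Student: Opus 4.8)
The plan is to reduce the computation of the singularities of $\mathcal Y$ to a purely toric / monomial calculation by writing down explicit local equations, and then identify the resulting cyclic quotient type. First I would work near a point of $\mathcal Y$ lying over $\{x_i = 0\}$ for some subset $I \subset \{1,\dots,n\}$ of the coordinates (the map $\pi^n$ is étale away from $\bigcup_i\{x_i=0\}$, so $\mathcal Y$ is smooth there and there is nothing to prove). Using the cover $\pi\colon G \to \mathbb P^1_K$ given by $u^q = x^m + t$, the $n$-fold fibre product $G^n$ over $\spec K$ has, in the chart $\spec K[x_1,u_1,\dots,x_n,u_n]$, the equations $u_i^q = x_i^m + t$ for $i=1,\dots,n$; including the parameter $t$ we get $\mathcal Y$ locally as $\operatorname{Spec}$ of the normalisation of $K[t][x_1,\dots,x_n,u_1,\dots,u_n]/(u_i^q - x_i^m - t)$.

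Next I would pass to the normalisation using the explicit formula already recorded in the excerpt: with $a,c>0$ chosen so that $am-cq=1$, over $\bar K$ the normalising coordinate on the $i$-th factor is $z_i = x_i^{-c}(u_i - t^{1/q})^a$, i.e.\ $x_i = z_i^q$ and $u_i = z_i^m + t^{1/q}$, so that $Z_{\bar K}\cong\mathbb P^n_{\bar K}$ as stated. The subtlety is that we are over $K=k(t)$, not $\bar K$, so $t^{1/q}$ is not available; instead I would introduce $s$ with $s^q = t$ as a new variable (an Artin–Schreier-type radicial base change) and observe that $\mathcal Y$ étale-locally looks like the quotient of the smooth chart $\operatorname{Spec} k[z_1,\dots,z_n,s]$ by the group scheme action coming from the relations $x_i = z_i^q$, $u_i = z_i^m + s$, $t = s^q$. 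Concretely, the invariant subring generated by $x_i = z_i^q$, $u_i - u_j = z_i^m - z_j^m$, $u_i^q - t = x_i^m = z_i^{qm}$, etc., is exactly the ring of invariants of a $\mu_q$-type (or rather $\mathbb Z/q$ after the étale local identification, since $p\nmid q$ is false — but $p\nmid m$, and the relevant weights mod $q$ only involve $1$ and $m$) action; the eigenweights of the coordinates $z_1,\dots,z_n$ and $s$ under this action are, after a change of basis centring on one factor, $1,\ldots,1$ on the $z_i$ and something that can be normalised away on the remaining free direction $\mathbb A^1$. Carrying out this bookkeeping carefully is where the real content lies: one must check that the monomials that are invariant are generated by those giving precisely the lattice $N = \sum \mathbb Z e_i + \mathbb Z\cdot\frac1q(e_1 + m_1 e_2 + \cdots + m_{n-1}e_n)$ with each $m_i \in \{1,-m\}$, the two values arising according to whether one compares factor $i$ to a fixed factor via $u_i - u_1$ (weight $\equiv m$, up to sign) or via $x_i$ directly (weight $\equiv 1$).

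Finally, I would assemble these local pieces: at a point of $\mathcal Y$ lying over $\{x_i=0 : i\in I\}$ with $|I|=r$, the above shows the singularity is, étale-locally, a product of a smooth $(n-r+1)$-dimensional factor with the $r$-dimensional cyclic quotient $\mathbb A^r\slash\frac1q(1,m_1,\dots,m_{r-1})$, $m_j\in\{1,-m\}$; absorbing the smooth directions (including the $\mathbb A^1$ from the $s$-direction, i.e.\ the $t$-direction) into the stated normal form $\mathbb A^n\slash\frac1q(1,m_1,\dots,m_{n-1})\times\mathbb A^1$ with the extra weights also allowed to be among $\{1,-m\}$ on the smooth locus (where the quotient is trivial) gives the Proposition. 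The main obstacle I expect is the middle step: verifying that the normalisation of the monomial ring $k[z_i,s]^{(\text{relations})}$ is exactly the toric variety attached to $N$ and $\Sigma$, i.e.\ that no further normalisation or additional generators are needed and that the weights are correctly $1$ versus $-m$ (and not, say, $c$ versus $a$ or some other residue) — this is a concrete but delicate semigroup computation that must be done in the $t$-adic completion to legitimately speak of étale-local structure. Everything else (smoothness away from the $x_i=0$ loci, the passage to $s^q=t$, the final cosmetic repackaging) is routine.
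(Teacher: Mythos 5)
There is a genuine gap: you compute the singularities of the wrong birational model. The variety $\mathcal Y$ in the Proposition is the normalisation of $\mathbb P^n\times\mathbb A^1_t$ (the compactification of $\mathbb A^n_{\mathbf y}$ by the single hyperplane $H=\{Y_0=0\}$) in $K(G^n)$, not the normalisation of $\mathbb A^n_{\mathbf x}\times\mathbb A^1_t$ or of $(\mathbb P^1)^n\times\mathbb A^1_t$. Your charts, namely the normalisation of $k[t][x_1,\dots,x_n,u_1,\dots,u_n]/(u_i^q-x_i^m-t)$, cover the fibre-product model; they agree with $\mathcal Y$ only over the locus where all $x_i\neq 0,\infty$, and the strata $\{x_i=0,\ i\in I\}$ you propose to work near are contracted by the birational map $(\mathbb P^1)^n\dashrightarrow\mathbb P^n$ (for instance $\{x_1=0,\ x_j\neq 0 \text{ for } j\geq 2\}$ maps to the single point $[0,1,0,\dots,0]$), so "a point of $\mathcal Y$ lying over $\{x_i=0\}$" does not parse. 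The divisor of $\mathcal Y$ lying over $H$ --- which is exactly where the relevant singularities sit and where the weights $m_i\in\{1,-m\}$ come from --- is invisible in your charts: its generic point has centre the deepest stratum $x_1=\dots=x_n=0$. Conversely, the singularities your charts do exhibit belong to the other model and are of a different type: already for $n=2$, eliminating $t$ gives $(u_1-u_2)^q=x_1^m-x_2^m$, which for $m=2$ and $p$ odd is the $A_{q-1}$-point $\mathbb A^2\bigl\slash\tfrac1q(1,-1)$ (times a line), and $-1\notin\{1,-m\}$ modulo $q$; so no bookkeeping in these coordinates can yield the stated normal form. The paper instead homogenises on $\mathbb P^n$ (setting $V_i=u_iY_i^a$ with $m=aq-r$), passes to the chart $Y_n\neq 0$, eliminates $t$ using the $n$-th equation, completes the $q$-th power via $\bar v_i=v_i-v_n\tilde y_i^a$, and after an \'etale change of coordinates reaches $\bar v_i^q=\bar y_0^m\bar y_i^{r_i}$ with $r_i\in\{1,r\}$ and $\bar y_0$ the equation of $H$; this is where the exponents $m$ and $r\equiv -m \pmod q$, hence the weights $\{1,-m\}$, actually arise.

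Two further problems. The justification "$\pi^n$ is \'etale away from $\bigcup_i\{x_i=0\}$" is false: $\pi$ is purely inseparable of degree $q$, hence nowhere \'etale; what is actually used (for the correct model) is that $Y$ is smooth over the preimage of $\mathbb A^n_{\mathbf y}$, so that only the locus over $H$ needs analysis. And the plan to realise the local ring as invariants of a $\mathbb Z/q$- (or $\mu_q$-) action after adjoining $s=t^{1/q}$ cannot be taken literally in characteristic $p$ with $q=p^r$: the cover $z\mapsto z^q$ is radicial, there is no \'etale-local $\mathbb Z/q$-cover, and "eigenweights" are not available. The paper's substitute is purely toric: the degree-$q$ radicial cover $\mathbb A^n_{\mathbf z}\times\mathbb A^1\to\mathcal Z$ factors through the normal toric variety of the lattice $N'=\sum_i\mathbb Z e_i+\mathbb Z\cdot\tfrac1q(e_1+m_1e_2+\dots+m_{n-1}e_n)$ (using $q\mid m+r_im_i$), and the resulting finite birational map onto $\mathcal Z$ identifies that toric variety with the normalisation. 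Your final "absorb the smooth directions" step is also not a proof as written, since a product of a smooth factor with a lower-dimensional quotient is not literally of the form \eqref{e_quotient}; in the paper this issue does not arise because every boundary point is shown directly to have an \'etale neighbourhood isomorphic to an open set of \eqref{e_quotient}. To repair your argument you would have to redo the entire local computation in the $\mathbb P^n$-charts along $H$ before any lattice or semigroup computation can produce the claimed weights.
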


\begin{proof}

Over an open set of $\mathbb A^n_{\mathbf y}$, we have that  $(G_y)^n$ is defined as the normalisation of the variety given by the equations
$$u_i^q=y_i^{-m}+t\qtq{for }i=1,\dots,n.$$
Let $Y_0,\dots,Y_n$ be homogeneous coordinates on $\mathbb P^n$ so that $y_i=Y_i/Y_0$ for $i=1,\dots,n$.
In order to extend the equations above to $\mathbb P^n$, we write $m=aq-r$ where $0<r<q$, we multiply $u_i^q=y_i^{-m}+t$ by $Y_i^{aq}$ and set
$V_i:=u_iY_i^a$. Thus, we get the equations
\begin{equation}\label{e_0}
V_i^q=Y_0^mY_i^r+tY_i^{aq} \qtq{for} i=1,\dots,n.
\end{equation}
We can view these as defining a complete intersection subvariety
$$
\mathcal{X}\subset \mathbb P(1^{n+1}_{\mathbf y}, a^n_{\mathbf v})\times \mathbb A^1_t.
$$
Then $\mathcal{Y}$ is the normalisation of $\mathcal{X}$.

Since $\mathcal{Y}$ is smooth if $Y_0\neq 0$, we are only interested in  the singularities along $\{Y_0=0\}$.
By symmetry, it is enough to consider the affine chart $Y_n\neq 0$. Let $\tilde{y}_i = \frac{Y_i}{Y_n}$ and $v_i = u_i\tilde{y}_i$.
Then, for $i=n$, \eqref{e_0} becomes  $v_n^q=\tilde{y}_0^m+t$. Thus, we can eliminate $t$ from the other equations in \eqref{e_0} and get
$$
v_i^q=\tilde{y}_0^m\tilde{y}_i^r+v_n^q\tilde{y}_i^{aq}-  \tilde{y}_0^m \tilde{y}_i^{aq}  \qtq{for} i=1,\dots,n-1$$
in $\mathbb{A}^n_{\tilde{y}_0, \cdots \tilde{y}_{n-1}} \times \mathbb{A}^n_{v_1, \cdots, v_n}$.
Setting $\bar v_i=v_i-v_n\tilde{y}_i^a$ yields
\begin{equation}\label{e_1}
\bar v_i^q=\tilde{y}_0^m\tilde{y}_i^r(1-\tilde{y}_i^m) \qtq{for}  i=1,\dots,n-1
\end{equation}
in $\mathbb{A}^n_{\tilde{y}_0, \cdots \tilde{y}_{n-1}} \times \mathbb{A}^{n-1}_{\bar{v}_1, \cdots, \bar{v}_{n-1}} \times \mathbb{A}^1_{v_n}$.

Pick any point $p_0$.
If $1-\tilde{y}_i^m$ is nonzero at $p_0$ then $\bar y_i:=\tilde{y}_i\sqrt[r]{1-\tilde{y}_i^m}$ is an \'etale coordinate, and if $1-\tilde{y}_i^m$ vanishes at $p_0$ then  $\bar y_i:=\tilde{y}_i^r(1-\tilde{y}_i^m)$ is.
Let $\bar{y}_0=\tilde{y}_0$. Thus, at every point we can choose  \'etale coordinates
such that the equations \eqref{e_1} become
\begin{equation}\label{only.quotient.say}
\bar v_i^q=\bar{y}_0^m\bar y_i^{r_i} \qtq{for} i=1,\dots,n-1
\end{equation}
which define $\mathcal{Z} \subset \mathbb{A}^n_{\bar{y}_0, \cdots \bar{y}_{n-1}} \times \mathbb{A}^{n-1}_{\bar{v}_1, \cdots, \bar{v}_{n-1}} \times \mathbb{A}^1_{v_n}$
where $r_i\in\{r,1\} $.

Let $\mathbf z=(z_0,\dots,z_{n-1})$. The variety $\mathcal{Z} $ 
admits a purely inseparable finite cover $\mathbb A^n_{\mathbf z}\times \mathbb{A}^1_{v_n}\to \mathcal{Z}$ of degree $q$ defined by
\begin{equation}\label{quotient}\bar{y}_i=z_i^q\ \qtq{for } i=0,\dots,n-1 \qtq{and}  \bar{v}_i=z_0^mz_i^{r_i}\ \quad\text{for }i=1,\dots,n-1.
\end{equation}
Set $m_i=1$ if $r_i=r$ and  $m_i=r$ if $r_i=1$, which guarantees
\begin{equation}\label{eq:div}q| m+ r_im_i ~\  \mathrm{ for} ~i=1,\dots,n-1.\end{equation}
To prove the proposition, it is enough to show that the normalisation
of the variety defined by (\ref{only.quotient.say}) coincides with the toric quotient variety
$$\mathcal{Z}':=\mathbb A^n_{\mathbf z}\bigl\slash \tfrac1{q}(1, m_1,\dots, m_{n-1}) \times \mathbb{A}^1_{v_n}$$
where the first factor is defined by the fan $\Sigma':= \sum_{i=1}^{n} \mathbb{R}_{\geq 0}\cdot e_i\subset N'\otimes \mathbb R$ where
$$N':=\tsum_{i=1}^{n}\mathbb{Z}\cdot e_i + \mathbb{Z}\cdot\tfrac1{q}(e_1+ m_1e_2 +\dots, m_{n-1}e_n).$$
To see this, we regard $\mathbb{A}_{\mathbf z}^n$ as the toric variety defined by the fan
$$\Sigma:=\tsum_{i=1}^n\mathbb R_{\ge 0}e_i \subset N\otimes \mathbb R=N'\otimes \mathbb R~\ \mathrm{where}~N:=\tsum_{i=1}^n\mathbb Z\cdot e_i,$$
then the natural map $\eta: \mathbb{A}_{\mathbf z}^n \to \mathbb A^n_{\mathbf z}\bigl\slash \tfrac1{q}(1, m_1,\dots, m_{n-1})$ corresponds to the natural inclusion of polyhedral cones $\Sigma\to \Sigma'$. By  (\ref{eq:div}), we have that $\bar{y}_i, \bar{v}_i$ are all regular functions of $\mathbb A^n_{\mathbf z}\bigl\slash \tfrac1{q}(1, m_1,\dots, m_{n-1})$, thus the quotient morphism (\ref{quotient}) factors as
$$\mathbb{A}_{\mathbf z}^n \times \mathbb{A}^1_{v_n} \to  \mathcal{Z}' \to \mathcal{Z}$$
where $\mathcal{Z}' \to \mathcal{Z}$ is birational and finite. Since $\mathcal{Z}'$ is normal, the claim follows.
\end{proof}

\begin{prop}[Reid-Tai criterion]\label{rtc}
Consider the quotient toric singularity of the form
$$
  \mathbb A^n\bigl\slash \tfrac1{q}(m_0, m_1,\dots, m_{n-1}),
$$
where $p\nmid m_i$, for each $i=0,\dots,n-1$.

Then the singularity is canonical (resp.\ terminal) if and only if, for every $j=1,\dots, q-1$, we have
$$
\tsum_{i=0}^{n-1}\ \overline{jm_i} \geq q  \qtq{(resp.\ $>q$),}
$$
where   $\overline{c}$ denotes the residue modulo $q$.
\end{prop}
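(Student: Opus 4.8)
The plan is to deduce the criterion from the standard combinatorial description of discrepancies on toric varieties, which is valid in every characteristic: toric resolutions are produced by subdividing fans, and the resulting discrepancies are read off a linear function on the cone, with no input from the base field. Let $e_0,\dots,e_{n-1}$ be the standard basis of $\mathbb R^n$, put $q=p^r$ and $w:=\frac1q(m_0e_0+\dots+m_{n-1}e_{n-1})\in N_{\mathbb Q}$, so that $N=\mathbb Z^n+\mathbb Z w$ and $X:=\mathbb A^n/\frac1q(m_0,\dots,m_{n-1})$ is the affine toric variety attached to the cone $\sigma=\sum_{i=0}^{n-1}\mathbb R_{\ge 0}e_i$ in $N_{\mathbb R}=\mathbb R^n$. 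Since $q=p^r$ and $p\nmid m_i$, we have $\gcd(m_i,q)=1$ for every $i$, and an elementary argument then shows that each $e_i$ is primitive in $N$; hence $e_0,\dots,e_{n-1}$ are exactly the primitive generators of the rays of $\sigma$. It follows that $-K_X=\sum_i D_{e_i}$ is $\mathbb Q$-Cartier and is represented on $\sigma$ by the linear form $\varphi(x_0,\dots,x_{n-1})=x_0+\dots+x_{n-1}$, the unique linear function with $\varphi(e_i)=1$ for all $i$; since $q\varphi\in M$, the variety $X$ is normal and $\mathbb Q$-Gorenstein, so that ``canonical'' and ``terminal'' make sense.

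Next I would invoke the toric discrepancy formula: for the toric divisor $E_v$ produced by the star subdivision of $\sigma$ along a primitive lattice point $v\in\sigma\cap N$, one has $a(E_v,X)=\varphi(v)-1$, and such valuations suffice to detect non-canonicity and non-terminality of $X$ (on a toric resolution every exceptional discrepancy has this shape, and conversely any prescribed $v$ can be made to occur in some resolution). Thus $X$ is canonical if and only if $\varphi(v)\ge 1$ for every $v\in(\sigma\cap N)\setminus\{0\}$, and terminal if and only if $\varphi(v)>1$ for every $v\in(\sigma\cap N)\setminus(\{0\}\cup\{e_0,\dots,e_{n-1}\})$. Because $\varphi$ is linear with positive coefficients, hence strictly positive on $\sigma\setminus\{0\}$, one may replace ``primitive lattice point'' by ``lattice point'' throughout without loss.

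Then I would reduce these infinitely many inequalities to $q-1$ of them. Any $v\in\sigma\cap N$ is uniquely $v=a+jw$ with $a\in\mathbb Z^n$ and $0\le j\le q-1$; non-negativity of the coordinates of $v$ forces $a_i\ge-\lfloor jm_i/q\rfloor$, hence $v\ge v_j$ coordinatewise, where $v_j:=\sum_i\frac{\overline{jm_i}}{q}e_i\in\sigma\cap N$. Since $\varphi$ has positive coefficients, $\varphi(v)\ge\varphi(v_j)=\frac1q\sum_i\overline{jm_i}$. For $j=0$ we have $v_0=0$ and the remaining lattice points lie in $\mathbb Z_{\ge 0}^n$, where $\varphi$ is integer-valued, equal to $\ge 1$ away from the origin and $\ge 2$ away from $\{0,e_0,\dots,e_{n-1}\}$, so $j=0$ imposes nothing. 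For $1\le j\le q-1$, having $q\mid jm_i$ for all $i$ would force $q\mid j$ (again using $\gcd(m_i,q)=1$), so $v_j\ne 0$; and $(v_j)_k=\overline{jm_k}/q<1$ shows $v_j\ne e_k$. Hence $X$ is canonical if and only if $\varphi(v_j)\ge 1$, i.e.\ $\sum_i\overline{jm_i}\ge q$, for every $j=1,\dots,q-1$, and terminal if and only if the strict inequality holds for every such $j$, which is precisely the assertion.

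The main obstacle is the second step: stating and correctly using toric discrepancy theory over a field of characteristic $p$, namely that toric resolutions exist, that $a(E_v,X)=\varphi(v)-1$ for the toric divisor attached to a primitive $v\in\sigma\cap N$, and that it is enough to test lattice points of $\sigma$. Granting this, the remainder is the elementary lattice bookkeeping above; the only delicate points there are the coordinatewise minimality of $v_j$ among lattice points with a fixed ``$w$-coordinate'' $j$, and the verification that $v_j\notin\{0,e_0,\dots,e_{n-1}\}$, both of which rely on $p\nmid m_i$.
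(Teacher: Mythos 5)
Your proof is correct and follows exactly the route the paper has in mind: the paper merely cites Reid's characteristic-zero statement and remarks that toric resolutions and the combinatorial discrepancy formula make it characteristic-independent, which is precisely the argument you write out in full (the linear form $\varphi$ with $\varphi(e_i)=1$, the formula $a(E_v,X)=\varphi(v)-1$, and the reduction to the representatives $v_j=\sum_i\tfrac{\overline{jm_i}}{q}e_i$). So you have simply supplied the standard toric computation that the paper leaves implicit; no discrepancy in approach.
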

Though this is usually stated in characteristic 0 (e.g. see \cite[p. 376]{Reid85}), these singularities have toric resolutions and the
discrepancies depend only on the toric fan, hence independent of the characteristic.

\medskip

 \begin{cor}\label{only.quotient.say.cor}
$Y(n,m,q)$ has canonical singularities for $n\geq q$ and terminal singularities for $n>q$.
\end{cor}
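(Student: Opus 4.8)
The plan is to feed the local structure of Proposition~\ref{p_only.quotient.say} into the Reid--Tai criterion (Proposition~\ref{rtc}); after that only a one-line estimate on residues modulo $q$ remains. By Proposition~\ref{p_only.quotient.say}, every singularity of $\mathcal Y$ is, \'etale locally, a product $Q\times\mathbb A^1$ with
\[
Q=\mathbb A^n\bigl\slash\tfrac1{q}(1,m_1,\dots,m_{n-1}),\qquad m_1,\dots,m_{n-1}\in\{1,-m\}.
\]
Passing to a product with a smooth variety does not change discrepancies, so $\mathcal Y$ is canonical (resp.\ terminal) as soon as every such $Q$ is; and since $Y=Y(n,m,q)$ is the generic fibre of $\mathcal Y\to\mathbb A^1_t$, the same then holds for $Y$, because a toric resolution of each $Q$ (which exists in every characteristic, with discrepancies determined by the fan, as noted after Proposition~\ref{rtc}) pulls back to a resolution of the corresponding local chart of $Y$ on which the discrepancies of the exceptional divisors coincide with those over $\mathcal Y$. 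Thus it suffices to decide when $Q$ is canonical, resp.\ terminal.

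Here Proposition~\ref{rtc} applies: setting $m_0:=1$ we have $p\nmid m_i$ for every $i=0,\dots,n-1$, since $p\nmid m$. Moreover $q=p^{r}$ is a prime power and $p\nmid m_i$, so $\gcd(m_i,q)=1$; hence $q\nmid jm_i$ and therefore $\overline{jm_i}\ge 1$ for every $j\in\{1,\dots,q-1\}$. Summing over the $n$ indices $i=0,\dots,n-1$ gives
\[
\sum_{i=0}^{n-1}\ \overline{jm_i}\ \ge\ n\qquad\text{for every }j=1,\dots,q-1 .
\]
If $n\ge q$, the left-hand side is $\ge q$ for all $j$, so by Proposition~\ref{rtc} the singularity $Q$ is canonical; if $n>q$, it is $\ge n\ge q+1>q$, so $Q$ is terminal. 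As these bounds are independent of the choice of $m_1,\dots,m_{n-1}\in\{1,-m\}$, we conclude that $Y(n,m,q)$ has canonical singularities for $n\ge q$ and terminal singularities for $n>q$. (The bound $n\ge q$ for canonicity is sharp: equality is attained at $j=1$ when all $m_i=1$.)

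All the geometric work has already been done in Proposition~\ref{p_only.quotient.say}, and the remaining content of the corollary is only the crude inequality $\overline{jm_i}\ge 1$, which is exactly the place where the hypothesis $p\nmid m$ enters. The single point deserving a moment's care is the passage from $\mathcal Y$ to its generic fibre $Y$ together with the stripping of the smooth factor $\mathbb A^1$; both steps are harmless here precisely because the singularities in question are toric, so their resolutions, and hence their discrepancies, do not depend on the characteristic.
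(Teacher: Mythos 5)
Your proposal is correct and follows essentially the same route as the paper: feed the local models of Proposition~\ref{p_only.quotient.say} into the Reid--Tai criterion, use $p\nmid m_i$ to get $\overline{jm_i}\ge 1$ and hence $\sum_{i=0}^{n-1}\overline{jm_i}\ge n$, and then transfer the conclusion from $\mathcal Y$ to its generic fibre $Y(n,m,q)$. Your extra remarks (discrepancies unchanged by the smooth $\mathbb A^1$ factor, and the passage to the generic fibre via spreading out divisors) merely make explicit what the paper's one-line "the same claims hold for the generic fibre" leaves implicit.
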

\begin{proof}
Since $p\nmid m_i$ we have that  $\overline{jm_i}\geq 1$, for all $i=1,\dots,n-1$ and $j=1,\dots,q-1$. In particular, by Proposition \ref{rtc}, we obtain that
the singularity \eqref{e_quotient} is  canonical for  $n\geq q$ and terminal for $n>q$.
The same claims hold true for the generic fibre $Y(q,m,n)$ of $\mathcal{Y} \to \mathbb{A}^1_t$.
\end{proof}
Note that these bounds are sharp if all $m_i=1$, but otherwise   smaller values of $n$ would work.

\medskip
\begin{prop}\label{p_fano}
If $n \geq m(q-1)$ then $Y(n,m,q)$ is a Fano variety with canonical singularities.
\end{prop}
\begin{proof}
As in the proof of Proposition \ref{p_only.quotient.say}, we have that the non-normal locus of $Y_{\bar{K}}$ maps over $H = \{Y_0=0\} \subset \mathbb{P}_{\bar{K}}^n$. Let $\psi\colon Z_{\bar{K}} \to \mathbb{P}_{\bar{K}}^n$ be the induced morphism and let  $\bar{H}= \psi^{-1}H$. Since $\bar{H}$ is irreducible, if $\nu\colon  Z_{\bar{K}} \to Y_{\bar{K}}$ is the normalisation map, we have
$$\nu^*K_{Y_{\bar{K}}} \sim K_{Z_{\bar{K}}}+c\bar{H}$$
where $c$ is an integer depending on  the contribution of the conductor.
We claim that $c= m(q-1)$, so that, since $Z_{\bar{K}} \cong \mathbb{P}_{\bar{K}}^n$, we have
$$
\nu^*K_{Y_{\bar{K}}} \sim K_{Z_{\bar{K}}}+(mq-m)\bar{H} \sim (mq-m-n-1)\bar{H},
$$
which, in turn,  implies that if $m(q-1)\leq n$ then $X$ is Fano and, by Corollary \ref{only.quotient.say.cor}, it has canonical singularities.

To compute $c$ we may restrict to an \'{e}tale open neighbourhood of the generic point of $\bar{\phi}^{-1}(H)$.
We work on the normalisation $\mathcal{U}$ of the chart (\ref{only.quotient.say}), which is an \'{e}tale open subset of $\mathcal{Y}$. By shrinking $\mathcal{U}$, we may assume that $\bar{y}_i \neq 0$ and $r_i=r$ for all $i=1,\dots, n-1$. Thus, $m_i =1$ for any $i$ and $\mathcal{U}$ is regular, but not smooth, over $\mathbb{A}^1_t$. Note that $\mathcal{U}$ is an open subset of the affine chart
$$
{\rm Spec}~ k\bigl[w_0=\frac{z_0}{z_1}, w_1=z_1^q, w_2=\frac{z_2}{z_1},\dots,w_{n-1}=\frac{z_{n-1}}{z_1}, v_n\bigr].
$$
Recall the equation  $v_n^q -t =\bar{y}_0^m = w_0^{mq}w_1^m$.
A generator of $\omega_Y$ on $U = \mathcal{U}_K$ is
$$
{w_0^{-mq}w_1^{1-m}}\cdot {d v_n\wedge d w_0 \wedge dw_2 \wedge \cdots\wedge dw_{n-1}}.
$$
Geometrically, set  $\bar v_n:=\bigl(v_n-t^{1/q}\bigr)  w_0^{-m}$ to get
$$
\bar v_n^q=w_1^m,  \qtq{which is smooth if} w_1\neq 0.
$$
A generator of $\omega_{Z_{\bar{K}}}$ over $\mathcal{U}_{\bar{K}}$ is
$$
{w_1^{1-m}}\cdot {d \bar v_n\wedge d w_0\wedge dw_2 \wedge \cdots dw_{n-1}} =  w_0^{-m}{w_1^{1-m}}\cdot {d v_n\wedge d w_0\wedge dw_2 \wedge \dots \wedge dw_{n-1}}.
$$
Since the divisor $\psi^{-1}(H)$ is defined by $\{w_0 =0\}$, we get $c=m(q-1)$ as  claimed.
\end{proof}

\subsection{Counterexamples to subadditivity of Kodaira dimensions}\label{sec:construction2}
Let $G$ be the curve over $K=k(t)$ constructed in the previous subsection. This yields a surface $S_1$ over $k$ equipped with a fibration $S_1 \to \mathbb{P}^1$. Consider a separable base change $C \to \mathbb{P}^1$ from some smooth projective curve $C$ with $g(C) \geq 2$. Let $S$ be a smooth resolution of $S_1 \times_{\mathbb{P}^1} C$. Then the generic fibre $F$ of the fibration $f\colon S \to C$ coincides with $G\otimes_{K}K(C)$ and $\kappa(S) \geq \kappa(C) + \kappa(F) \geq 1$.
Let $X^{(n)}:=S \times_C \cdots \times_C S$ be the $n$-th fibre product of $S$ over $C$. If $n \geq m(q-1)$ then there exists a birational model $Y^{(n)} \to C$ such that the generic fibre is isomorphic to $Y(n,m,q)$, and such that,  by Proposition \ref{p_fano}, we have that  $\kappa(Y^{(n)}, K_{Y^{(n)}}) = -\infty$. Similarly to Section \ref{sec:construction1}, this yields examples of dimension $n+1$.

More specifically, by setting $m=2, q=p$ we get examples of dimension $2p-1$, and by setting $m=3, q=p=2$ we get examples of dimension 4.

\section{Counterexamples to a log version of $C_{2,1}$ over imperfect fields}\label{sec:non-alg-closed-field}
We conclude this paper by introducing an example of a fibration that violates a log version of Question~\ref{iitaka-char-p} over an imperfect field.
The authors learned about this example from H. Tanaka.
\begin{eg} \label{eg:tanaka}
Let $k$ be an imperfect field of characteristic $p\in\{2,3\}$.
Tanaka~\cite[Theorem~1.4]{Ta16} constructed a $k$-morphism $\rho\colon S \to C$ with the following properties:
\begin{itemize}
\item $S$ is a projective regular surface over $k$;
\item there exists a prime divisor $C_2$ on $S$ such that if $\Delta_S:=\left( \frac{2}{p} -\varepsilon \right)C_2$, where $\varepsilon \in \mathbb Q$ is such that $0 < \varepsilon \ll \frac{2}{p}$, then  $(S, \Delta_S)$ is Kawamata log terminal and $-(K_S +\Delta_S)$ is ample;
\item $\rho$ is a $\mathbb P^1$-bundle;
\item $C$ is a projective regular curve over $k$ with $K_C \sim 0$.
\end{itemize}
These properties can be viewed as pathologies in birational geometry in positive characteristic. Indeed, it is known that the image of a variety of log Fano type in characteristic zero is again of log Fano type (\cite[Lemma~2.8]{PS09}).

Set $\Gamma:=\frac{2}{p}C_2$.
Then the pair $(S,\Gamma)$ is log canonical and \cite[Lemma~3.4]{Ta16} implies that $-(K_S +\Gamma)$ is $\mathbb Q$-linearly equivalent to the pullback of an ample $\mathbb Q$-divisor on $C$.
Hence, if $S_{\eta}$ is the generic fibre of $\rho$, then
$$
-\infty
= \kappa(S, K_S +\Gamma)
< \kappa(S_{\eta}, K_{S_{\eta}} +\Gamma|_{S_{\eta}})
+ \kappa(C, K_C) =0.
$$
\end{eg}
\begin{rem} \label{rem:tanaka}
We use the same notation as in Example~\ref{eg:tanaka}.

\noindent (1)
The pair $\left( S_{\overline\eta}, \Gamma|_{S_{\overline\eta}} \right)$ is not $F$-pure, where $S_{\overline\eta}$ is the geometric generic fibre of $\rho$. Indeed, by the construction in \cite[\S 3]{Ta16},  there exists  a closed point $Q \in S_{\overline\eta}$ such that $C_2|_{S_{\overline\eta}} = pQ$ as divisors.

\noindent (2)
Roughly speaking, the curve $C$ is unirational.
More precisely,  \cite[Proposition~3.6]{Ta16} implies that
there is an extension $k'/k$ of degree $p$ such that the normalisation of $C\times_k k'$ is $k'$-isomorphic to the projective line $\mathbb P^1_{k'}$.
\end{rem}
\bibliographystyle{plain}

\end{document}